\documentclass[noamsfonts,a4paper,10pt]{amsart}
\usepackage[margin=2cm]{geometry}
\usepackage[bitstream-charter,cal]{mathdesign}
\usepackage{hyperref}
\usepackage{setspace}
\theoremstyle{plain}
\newtheorem*{theorem*}{Theorem}

\title{A counterexample to Haagerup's problem on subadditive weights}
\author[I. Choi]{Ikhan Choi}
\address{Graduate School of Mathematical Sciences\\The University of Tokyo\\3-8-1 Komaba Meguro-ku Tokyo 153-8914, Japan}
\email{choi@ms.u-tokyo.ac.jp}
\author[S. Jo]{Sunghyeon Jo}
\address{QED Audit and Georgia Institute of Technology}
\email{sjo65@gatech.edu}
\subjclass[2020]{46L10, 47L50}
\keywords{von Neumann algebras, normal weights}

\begin{document}

\begin{abstract}
We construct a subadditive weight on the type I$_\infty$ factor that preserves directed suprema but is not $\sigma$-weakly lower semi-continuous.
This provides a counterexample to Haagerup's first problem posed in 1975.
\end{abstract}

\maketitle

\section{Introduction}
In 1975, Uffe Haagerup showed in his master's thesis that several conditions for normality of weights on a von Neumann algebra are all equivalent, and posed three problems on normal weights, numbered 1.10, 1.11, and 2.7 in his paper \cite{MR380438}.
The last problem 2.7 was recently resolved affirmatively by the first author in \cite{MR4972190}, while Bikchentaev investigated the first problem 1.10 in \cite{MR2918421} and \cite{MR3230397}, obtaining partial results in the commutative and finite-dimensional cases.
The second problem 1.11 seems to be still open.

This paper proves that the answer to the first problem is negative.
For a subadditive weight $\varphi$ on a von Neumann algebra $M$, we say $\varphi$ \emph{preserves directed suprema} if $\varphi(\sup_ix_i)=\sup_i\varphi(x_i)$ for any bounded non-decreasing net $x_i\in M$ of positive elements, and $\varphi$ is \emph{$\sigma$-weakly lower semi-continuous} if $\varphi(\lim_ix_i)\le\liminf_i\varphi(x_i)$ for any $\sigma$-weakly convergent net $x_i\in M$ of positive elements.
It is easy to see that every $\sigma$-weakly lower semi-continuous subadditive weight preserves directed suprema, and the first problem of Haagerup asks if the converse is also true.

We use the notation $\mathbb{N}:=\mathbb{Z}_{\ge1}$, and the positive cone of a ordered vector space $E$ will be denoted by $E^+$.

\section*{AI disclosure statement}
The counterexample in this paper was constructed by the second author using ChatGPT-5.6 Sol, and the proof was checked and rewritten by the first author.
The authors take full responsibility for the content and correctness of this paper.

\section{A counterexample}

\begin{theorem*}
There is a subadditive weight on the von Neumann algebra $B(H)$ of bounded linear operators on a separable infinite-dimensional Hilbert space $H$ that preserves directed suprema but is not $\sigma$-weakly lower semi-continuous.
\end{theorem*}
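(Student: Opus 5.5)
The plan is to reduce the problem to finding a suitable convex cone, and then to build that cone inside $B(H)^+$. The reduction uses weights taking only the values $0$ and $\infty$. For a subset $C\subseteq B(H)^+$, set $\varphi_C(x)=0$ if $x\in C$ and $\varphi_C(x)=\infty$ otherwise. Then:
\begin{itemize}
\item $\varphi_C$ is a subadditive weight (positively homogeneous, subadditive, with $\varphi_C(0)=0$) exactly when $C$ is a convex cone. If monotonicity is also wanted, $C$ should be hereditary.
\item $\varphi_C$ preserves directed suprema exactly when $C$ is closed under suprema of bounded increasing nets. Indeed, $\sup_i\varphi_C(x_i)=0$ means every $x_i$ lies in $C$, and we need $\sup_i x_i\in C$ in that case.
\item $\varphi_C$ is $\sigma$-weakly lower semi-continuous exactly when $C$ is $\sigma$-weakly closed.
\end{itemize}
So the theorem reduces to one task: construct a (hereditary) convex cone $C\subseteq B(H)^+$ that is monotone closed but not $\sigma$-weakly closed. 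If a real-valued example were preferred, one could smooth $\varphi_C$ afterwards, for instance by taking an infimum over decompositions against a fixed normal weight. The $0/\infty$ version already answers the question as posed, so I would not do this.

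For the construction, I would first note that the commutative shadow will not work. Cones in $\ell^\infty(\mathbb N)^+$ defined by conditions on the diagonal $d_n(x)=\langle xe_n,e_n\rangle$ tend either to fail monotone closedness (for example $c_0^+$, or summable sequences) or to be pointwise closed. This is consistent with Bikchentaev's partial positive results in the commutative case. The cone must therefore exploit noncommutativity, specifically the gap between increasing nets and arbitrary $\sigma$-weakly convergent nets of positive operators. A $\sigma$-weakly convergent net can rotate: for example, $x_n=$ the projection onto $\mathrm{span}\{e_1+e_n\}/\sqrt2$ converges weakly to the non-projection $\tfrac12 e_{11}$. Increasing nets cannot do this, since their limits are controlled from below by every member.

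My first attempt would be a cone defined by a range/support condition relative to a fixed injective positive operator $a$ with dense, non-closed range. One option is $C=\{x\ge0: x^{1/2}H\subseteq a H\}$, or a variant using the operator range $a^{1/2}H$. Each such condition defines a hereditary convex cone, since operator ranges of sums are controlled by the sum of the ranges. I would then check:
\begin{enumerate}
\item Closedness under increasing suprema, using Douglas' lemma. From $x_i\le x=\sup_i x_i$ one knows that $x_i^{1/2}=x^{1/2}c_i$ with contractions $c_i$. The aim is to transfer the range inclusion to $x$ from the $x_i$ in a uniform way. This might require replacing $C$ by its closure under countable increasing unions, and then verifying that this closure is still not $\sigma$-weakly closed.
\item A rotating sequence of rank-one projections in $C$ whose weak limit has range not contained in $aH$. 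This gives the failure of $\sigma$-weak closedness.
\end{enumerate}

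The main obstacle is the tension between these two requirements. Any cone natural enough to be monotone closed risks being $\sigma$-weakly closed as well, because for hereditary cones the Krein--Smulian-type arguments push in that direction. To handle this, I would define $C$ as the smallest monotone-closed convex cone containing a chosen family of rank-one operators, such as the projections onto $\mathbb C(e_1+e_n)$. I would then prove that this cone misses a specific $\sigma$-weak limit point, such as $e_{11}$, by exhibiting an invariant preserved under sums, scalings and increasing suprema but violated by $e_{11}$. The natural candidate invariant is a quantitative inequality, for example an estimate of $\langle xe_1,e_1\rangle$ by off-diagonal or tail data of $x$ that survives increasing limits by normality. Identifying this invariant precisely is the crux of the proof.
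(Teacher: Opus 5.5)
Your reduction leads to a task with no solution, so the gap goes beyond the unidentified invariant.

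First, heredity of $C$ is forced, not optional. Applying preservation of directed suprema to the two-term non-decreasing net $y\le x$ gives $\varphi_C(y)\le\varphi_C(x)$. Your second bullet is therefore only correct for hereditary $C$.

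Second, every hereditary convex cone $C\subseteq B(H)^+$ that is closed under bounded increasing suprema is automatically $\sigma$-weakly closed:
\begin{itemize}
\item For $x\in C$, the elements $\min(nx,1)\le nx$ lie in $C$ and increase to the support projection $s(x)$, so $s(x)\in C$.
\item Since $s(x+y)=s(x)\vee s(y)$, the projections in $C$ form an upward directed family. Its supremum $e$ therefore also lies in $C$.
\item Hence $C=\{x\in B(H)^+ : x=exe\}$, which is $\sigma$-weakly closed.
\end{itemize}
So every $\{0,\infty\}$-valued subadditive weight that preserves directed suprema is $\sigma$-weakly lower semi-continuous.

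This explains concretely why your candidates fail.
\begin{itemize}
\item Your range cone contains the finite-rank projections onto the spans of an orthonormal basis chosen inside the dense range $aH$. These increase to $1\notin C$.
\item The hereditary monotone-closed cone generated by the projections onto $\mathbb{C}(e_1+e_n)$ is all of $B(H)^+$, because these projections have join $1$. So no invariant can exclude $e_{11}$.
\item Without heredity, $\varphi_C$ does not preserve directed suprema at all.
\end{itemize}

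The missing idea is that the weight must take finite values in an essential, quantitative way. What has to be monotone closed are the convex level sets $\{\varphi\le r\}$, not a cone.

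The paper takes $q_n=(\varepsilon_0+\varepsilon_n)(\varepsilon_0+\varepsilon_n)^*$ and defines $\varphi(x)=\inf\{\|a\|_1 : x\le\sum_k a_kq_k,\ a\in\ell^1(\mathbb{N})^+\}$. For an increasing net with $\varphi(x_i)\le r$, it argues as follows:
\begin{itemize}
\item The dominating sequences are bounded in $\ell^1(\mathbb{N})\cong c_0(\mathbb{N})^*$.
\item A weak$^*$ cluster point $a$ then dominates the compressions of the $x_i$ to $\overline{\mathrm{span}}\{\varepsilon_k : k\ge1\}$.
\item The $\varepsilon_0$-direction is recovered from the tail estimate $\langle q_a\eta_n,\eta_n\rangle=\sum_{k>n}a_k\to0$, where $\eta_n=\varepsilon_0-\sum_{k\le n}\varepsilon_k$.
\end{itemize}
This is the kind of tail inequality you were looking for, but it only has force together with the bound on $\|a\|_1$.

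Along the non-monotone sequence $q_n$, which converges $\sigma$-weakly to the projection onto $\mathbb{C}\varepsilon_0$, the $\ell^1$-mass escapes to infinity. The same tail estimate then shows that this projection has infinite weight, while $\varphi(q_n)\le1$. Consistent with the argument above, the domain $\{\varphi<\infty\}$ of this weight is not monotone closed.
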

\begin{proof}
We identify $H=\mathbb{C}\varepsilon_0\oplus\ell^2(\mathbb{N})$ with the standard orthonormal basis $\varepsilon_k\in H$ indexed by $k\ge0$.
For $n\ge1$, let $q_n\in B(H)^+$ be such that $q_n/2$ is the rank-one projection onto $\mathbb{C}(\varepsilon_0+\varepsilon_n)$ so that $\langle q_n\varepsilon_i,\varepsilon_j\rangle=1$ if and only if $\{i,j\}\subset\{0,n\}$, and introduce a positive linear map $\ell^1(\mathbb{N})\to B(H):a\mapsto q_a$ by the norm-convergent series
\[q_a:=\sum_{k=1}^\infty a_kq_k,\qquad a=(a_k)_{k\ge1}\in\ell^1(\mathbb{N}).\]
Define $\varphi:B(H)^+\to[0,\infty]$ by
\[\varphi(x):=\inf\{\|a\|_1:x\le q_a,\ a\in\ell^1(\mathbb{N})^+\},\qquad x\in B(H)^+.\]
By construction, $\varphi$ is a subadditive weight.
We will show that it preserves directed suprema, but is not $\sigma$-weakly lower semi-continuous.

Let $p\in B(H)$ be the projection onto $\ell^2(\mathbb{N})$, and let $\eta_n:=\varepsilon_0-\sum_{k=1}^n\varepsilon_k$ for $n\ge1$.
Then for any $a\in\ell^1(\mathbb{N})^+$, the compression $pq_ap=\mathrm{diag}(a_1,a_2,\cdots)\in B(pH)$ of $q_a$ on $pH$ is a diagonal operator, and we have
\[\langle q_a\eta_n,\eta_n\rangle=\sum_{k=n+1}^\infty a_k\to0,\qquad n\to\infty.\]

First we claim that $\varphi$ preserves directed suprema.
Let $x_i\in B(H)^+$ be a bounded non-decreasing net with the supremum $x\in B(H)^+$ with $r:=\sup_i\varphi(x_i)$.
Since the case $r=\infty$ is clear, assume that $r<\infty$.
Fix $\varepsilon>0$ and choose $a_i\in\ell^1(\mathbb{N})^+$ for each $i$ such that $x_i\le q_{a_i}$ and $\|a_i\|_1\le r+\varepsilon$.
Take a subnet $a_j$ of $a_i$ such that $a_j\to a$ weakly$^*$ in $\ell^1(\mathbb{N})\cong c_0(\mathbb{N})^*$.
Because each $\sigma$-weakly continuous linear functional on the subalgebra $\ell^\infty(\mathbb{N})\subset B(pH)$ of diagonal operators sits in $\ell^1(\mathbb{N})\subset c_0(\mathbb{N})$, we have $\sigma$-weak convergence $pq_{a_j}p\to pq_ap$ in $B(pH)$, so we obtain $px_ip\le pq_ap$ for all $i$ by the cofinality of the subnet.
Let $\xi\in H$ be arbitrary such that $\langle\xi,\varepsilon_0\rangle=1$, and set $\xi_n:=\xi-\eta_n$ for $n\ge1$.
For each fixed $i$, since $\|x_i^{\frac12}\eta_n\|^2=\langle x_i\eta_n,\eta_n\rangle\le\langle q_{a_i}\eta_n,\eta_n\rangle\to0$ we have $\langle x_i\xi_n,\xi_n\rangle\to\langle x_i\xi,\xi\rangle$ as $n\to\infty$, and $\xi_n\in pH$ shows $0\le\langle p(q_a-x_i)p\xi_n,\xi_n\rangle=\langle(q_a-x_i)\xi_n,\xi_n\rangle\to\langle(q_a-x_i)\xi,\xi\rangle$.
By limiting $i$ and taking $\xi$ arbitrary we have $x\le q_a$, and it implies $\varphi(x)\le\|a\|_1\le r+\varepsilon$.
Hence the claim follows by $\varepsilon\to0$.

Now we prove $\varphi$ is not $\sigma$-weakly lower semi-continuous.
Observe that $q_n\to1-p$ $\sigma$-weakly in $B(H)$ as $n\to\infty$.
We can see easily $\varphi(q_n)\le1$ by definition of $\varphi$, but $\langle(1-p)\eta_n,\eta_n\rangle=1\not\to0$ as $n\to\infty$ implies that there is no $a\in\ell^1(\mathbb{N})^+$ such that $1-p\le q_a$, meaning $\varphi(1-p)=\infty$.
This completes the proof.
\end{proof}

\bibliographystyle{alpha}
\bibliography{bib}

\end{document}